\documentclass[letter, 10pt, conference]{ieeeconf}
\IEEEoverridecommandlockouts        

\usepackage{etex}
\usepackage[vlined,ruled]{algorithm2e}
\usepackage[dvipsnames]{xcolor}
\usepackage{pgfpages}
\usepackage[cmex10]{amsmath}		
\usepackage{amssymb, mathrsfs}
\usepackage{cite}
\usepackage{url}


\usepackage{dsfont}
\usepackage{siunitx}
\usepackage{verbatim}									
\usepackage{arydshln}
\usepackage{mathtools}									
\usepackage{siunitx}									
\usepackage{mdframed}

\usepackage{graphicx}
\usepackage{epstopdf}

\usepackage[font=small]{caption}
\usepackage{subcaption}
\usepackage[activate={true,nocompatibility},final,tracking=true,kerning=true,spacing=true,factor=1100,stretch=10,shrink=10]{microtype}

\usepackage{array}
\usepackage{multirow}
\usepackage{enumerate}
\usepackage{booktabs}


\graphicspath{{./fig/}}



\usepackage{pgfplots}
\pgfplotsset{width=8cm,compat=newest}
\newcommand*{\damping}{0.006}%
\newcommand*{\freq}{25}%
\pgfmathsetmacro{\freqd}{sqrt(1-(\damping)^2)*\freq}%

\pgfplotsset{
    standard/.style={
    axis x line=middle,
    axis y line=middle,
    enlarge x limits=0.15,
	enlarge y limits=0.15,
	every axis plot post/.style={mark options={fill=black}},
	}
}

\pgfplotsset{%
    ,compat=1.12
    ,every axis x label/.style={at={(current axis.right of origin)},anchor=north west}
    ,every axis y label/.style={at={(current axis.above origin)},anchor=north east}
    }


\usepackage{tikz}
\usetikzlibrary{arrows}
\usetikzlibrary{decorations.pathmorphing}
\usetikzlibrary{decorations.markings}
\usetikzlibrary{snakes}
\usetikzlibrary{matrix}
\usetikzlibrary{calc}
\usetikzlibrary{patterns}
\usetikzlibrary{positioning}
\usetikzlibrary{shapes}
\usetikzlibrary{shapes.symbols}
\usetikzlibrary{shapes.geometric}
\usetikzlibrary{shadows.blur}
\usetikzlibrary{shapes.arrows}
\usetikzlibrary{shapes.multipart}
\usetikzlibrary{shapes.callouts}
\usetikzlibrary{shapes.misc}

\tikzstyle{every node}=[font=\small]
\tikzstyle{every path}=[line width=0.8pt,line cap=round,line join=round]

\usepackage[american,smartlabels]{circuitikz}
\ctikzset{bipoles/thickness=1}
\ctikzset{bipoles/length=0.8cm}
\ctikzset{bipoles/diode/height=.375}
\ctikzset{bipoles/diode/width=.3}
\ctikzset{tripoles/thyristor/height=.8}
\ctikzset{tripoles/thyristor/width=1}
\ctikzset{bipoles/vsourceam/height/.initial=.7}
\ctikzset{bipoles/vsourceam/width/.initial=.7}

\newcommand{\real}{\mathbb{R}}

\newcommand{\setdef}[2]{\{#1 \;|\; #2\}}

\DeclareMathOperator*{\minimize}{minimize} 									

 	                              %
 	                              %
 	                              %

\newcommand{\vect}[1]{\mathbbold{#1}}

\newcommand{\vzeros}[1][]{\vect{0}_{#1}}
\DeclareSymbolFont{bbold}{U}{bbold}{m}{n}
\DeclareSymbolFontAlphabet{\mathbbold}{bbold}

\newcommand{\map}[3]{#1: #2 \rightarrow #3}

\usepackage{soul}

\DeclareMathOperator{\nullspace}{null}
\DeclareMathOperator{\range}{range}
\DeclareMathOperator{\rank}{rank}
\newcommand{\define}{\coloneqq}

\DeclareMathOperator{\subto}{subject~to}

\newcommand{\gradmat}{Q}

\newcommand\xqed[1]{%
  \leavevmode\unskip\penalty9999 \hbox{}\nobreak\hfill
  \quad\hbox{#1}}
\newcommand\envend{\xqed{$\triangle$}}

\newcommand\oprocendsymbol{\hbox{$\square$}}
\newcommand\oprocend{\relax\ifmmode\else\unskip\hfill\fi\oprocendsymbol}


\newtheorem{theorem}{Theorem}[section]

\newtheorem{proposition}[theorem]{Proposition}
\newtheorem{problem}{Problem}[section]

\newtheorem{remark}{Remark}[section]
\newtheorem{assumption}{Assumption}[section]









\newif\ifforstudents

\binoppenalty=\maxdimen
\relpenalty=\maxdimen


\title{\bf Optimal Steady-State Control for Linear Time-Invariant Systems
\thanks{
}
}

\author{Liam S. P. Lawrence, Zachary E. Nelson, Enrique Mallada, and John W. Simpson-Porco%
    \thanks{
      L. S. P. Lawrence and J. W. Simpson-Porco are with the Department of Electrical and Computer Engineering, University of Waterloo, ON, Canada. Email: {\tt \{lsplawre,jwsimpson\}@uwaterloo.ca}. Z. E. Nelson and E. Mallada are with the Department of Electrical and Computer Engineering, Johns Hopkins University, 3400 N. Charles Street, Baltimore, MD 21218. Email: {\tt \{znelson2,mallada\}@jhu.edu}. L. S. P. Lawrence and J. W. Simpson-Porco acknowledge the support of NSERC Discovery Grant RGPIN-2017-04008 and the CGS M program. The work of E. Mallada and Z. E. Nelson was supported by ARO through contract W911NF-17-1-0092, and NSF through grants CNS 1544771 and CAREER 1752362. 
}
}


\begin{document}
\maketitle
\thispagestyle{empty}
\pagestyle{empty}


\begin{abstract}
  We consider the problem of designing a feedback controller that guides the input and output of a linear time-invariant system to a minimizer of a convex optimization problem. The system is subject to an unknown disturbance that determines the feasible set defined by the system equilibrium constraints. Our proposed design enforces the Karush-Kuhn-Tucker optimality conditions in steady-state without incorporating dual variables into the controller. We prove that the input and output variables achieve optimality in equilibrium and outline two procedures for designing controllers that stabilize the closed-loop system. We explore key ideas through simple examples and simulations.
\end{abstract}

\section{Introduction}\label{Sec:Introduction}



Many engineering systems must be operated at an ``optimal'' steady-state that minimizes operational costs. For example, the generators that supply power to the electrical grid are scheduled according to the solution of an optimization problem which minimizes the total production cost of the generated power \cite{AKB-AA-TS:14}. This same theme of guiding system variables to optimizers emerges in other areas, such as network congestion management \cite{FPK-AKM-DKHT:98,EM-FP:08}, chemical processing \cite{DD-MG:05,MG-DD-MP:05}, and wind turbine power capture \cite[Section 2.7]{IM-AIB-NAC-EC:08},\cite{BB-HS:10}.

Traditionally, the optimal steady-state set-points are computed offline in advance, and then controllers are used in real time to track the set-points. However, this two-step design method is inefficient if the set-points must be updated repeatedly and often. For instance, the increased use of renewable energy sources causes rapid fluctuations in power networks, which decreases the effectiveness of the separated approach due to the rapidly-changing optimal steady-state \cite{TS-CDP-AvdS:17}. Furthermore, the two-step method is infeasible if \emph{unmeasurable disturbances} change the optimal set-points. To continuously keep operational costs to a minimum, such systems should instead employ a controller that continuously solves the optimization problem and guides the system to an optimizer despite disturbances; we will call the problem of designing such a controller the \emph{optimal steady-state} (OSS) control problem.

The prevalence of the OSS control problem in applications has motivated much work on its solution for general system classes. In the \emph{extremum-seeking control} approach to OSS control, a harmonic signal is used to perturb an uncertain system, and the gradient of a cost function is then inferred by filtering system measurements; a control signal is applied to drive the gradient to zero \cite{KM-HHW:00,DD-MG:05,MG-EM-DD:15}. Joki{\'c}, Lazar, and van den Bosch propose using the Karush-Kuhn-Tucker conditions for optimality as the basis of a nonlinear feedback controller that guides the outputs of a system to an optimizer \cite{AJ-ML-PPJvdB:09b,AJ-ML-PPJvdB:08}. Nelson and Mallada consider an optimization problem over system states and apply gradient feedback with a proportional-integral (PI) controller; if the full system state cannot be directly measured, a Luenberger observer is employed \cite{ZEN-EM:18}.

Much of the literature on OSS control problems focuses on the optimization of either the steady-state input \emph{or} the steady-state output of the system. The optimal power flow problem, for example, concerns the minimization of the total cost of power produced by dispatchable units that serve as inputs \cite{FD-JWSP-FB:13y,LG-SHL:16,EM-CZ-SL:17}. Other work has focused on optimization problems over the output only; the cost of applying the input necessary to achieve the desired output is not considered \cite{EDA-SVD-GBG:15,FDB-HBD-CE:12,DD-MG:05,KH-JPH-KU:14,AJ-ML-PPJvdB:09b,AJ-ML-PPJvdB:08}. 

These input-only or output-only optimization designs are formulated under the assumption that the resulting optimizer is in fact \emph{consistent} with the dynamic system operating in equilibrium. As such, the key obstacle to relaxing this assumption \textemdash{} and to extending the formulations to steady-state input-output optimization \textemdash{} is that an arbitrary input-output optimizer \emph{need not} be consistent with dynamic equilibrium. Controllers proposed currently in the literature for optimization over both the input and output only apply to systems with special structure: \cite{XZ-AP-NL:18,EM-CZ-SL:17} assume the plant can be interpreted as a primal-dual gradient algorithm, while Hatanaka and colleagues consider the specific case of temperature dynamics in a building\cite{TH-XZ-WS-MZ-NL:17}. An OSS controller design for the general case is still lacking.

\subsection{Contributions}
The main contribution of this paper is an OSS controller design for any stabilizable linear time-invariant (LTI) system where the steady-state objective function depends on both the input and output. We ensure the optimizer is consistent with the equilibrium conditions for the LTI system by including the equilibrium equations as explicit equality constraints of the optimization problem. Our key insight is that, rather than enforcing these equality constraints with dual variables in the controller, one may enforce that the gradient of the objective function lie (at equilibrium) in a subspace defined by the LTI system matrices. This eliminates the need for additional dynamic controller states, resulting in a lower-dimensional controller design. We offer two strategies to ensure closed-loop stability: search for appropriate proportional-integral gains using a linear matrix inequality stability criterion, or design a dynamic stabilizer using $\mathcal{H}_\infty$ synthesis methods.

\subsection{Notation}
The $n \times n$ identity matrix is $I_n$, $\vzeros[]$ is a matrix of zeros of appropriate dimension, and $\vzeros[n]$ is the $n$-vector of all zeros. The vector norm $\|\cdot\|$ is always assumed to be the Euclidean norm $\|\cdot\|_2$. For symmetric matrices $A$ and $B$, the inequality $A \succ B$ means $A-B$ is positive definite, while $A \succeq B$ means $A-B$ is positive semidefinite. If $\map{h}{\real^n}{\real}$ is differentiable, $\map{\nabla h}{\real^n}{\real^n}$ denotes its gradient. We say $h: \real^n \to \real$ is \emph{strongly convex} with parameter $\kappa > 0$ if $\left(\nabla h(x) - \nabla h(y)\right)^{\sf T}(x-y) \geq \kappa\|x-y\|^2$ for all $x,y \in \real^n$. The gradient $\nabla h$ is \emph{(globally) Lipschitz continuous} with parameter $L > 0$ if $\|\nabla h(x) - \nabla h(y)\| \leq L\|x-y\|$ for all $x,y \in \real^n$.

\section{OSS Control Problem Statement}\label{Sec:ProblemStatement}
We consider the LTI system
\begin{subequations}\label{Eq:LtiSystem}
  \begin{align}
    \dot{x} &= Ax+Bu+d\\
    y &= Cx
  \end{align}
\end{subequations}
as the plant whose input and output we wish to optimize in steady-state. We omit the direct throughput term $Du$ for simplicity of presentation. The control input is $u \in \real^m$, the state is $x \in \real^n$, and the output is $y \in \real^p$. We assume that $m, p \leq n$. The plant is subject to an \emph{unknown but constant} disturbance $d \in \real^{n}$. We denote equilibrium values of the state, input, and output by $\bar{x}$, $\bar{u}$, and $\bar{y}$ respectively, which satisfy $A\bar{x}+B\bar{u}+d=\vzeros[n]$ and $\bar{y} = C\bar{x}$.

The desired steady-state operating point $(\bar{y},\bar{u})$ of the system \eqref{Eq:LtiSystem} is determined by the optimization problem
\begin{equation}\label{Eq:OptimizationProblem}
  \begin{aligned}
    \minimize_{\bar{x},\,\bar{y},\,\bar{u}} &\quad g(\bar{y},\bar{u})\\
    \subto &\quad A\bar{x}+B\bar{u}+d=\vzeros[n]\\
    &\quad \bar{y} = C\bar{x}\,.
  \end{aligned}
\end{equation}
The cost function $g:\real^p \times \real^m \to \real$ is a differentiable convex function of the equilibrium output $\bar{y}$ and the equilibrium control input $\bar{u}$. The feasible region is the set of forced equilibrium points for the plant dynamics. Including additional equality and inequality constraints in \eqref{Eq:OptimizationProblem} is a relatively straightforward extension of the results that follow, and will be detailed in a forthcoming extended manuscript.

Consider a nonlinear feedback controller for the plant \eqref{Eq:LtiSystem} with state $x_{\rm c} \in \real^{n_{\rm c}}$ of the form
\begin{equation}\label{Eq:NonlinearController}
  \begin{aligned}
    \dot{x}_{\rm c} &= F_{\rm c}(x_{\rm c},y,u)\\
    u &= H_{\rm c}(x_{\rm c},y,u)\,.
  \end{aligned}
\end{equation}
The precise problem we wish to solve is as follows.

\smallskip

\begin{problem}[\bf Optimal Steady-State Control Problem]\label{Prob:Oss}
  Design a feedback controller of the form \eqref{Eq:NonlinearController} processing measurements $y(t)$ and producing control signal $u(t)$ such that for any constant disturbance vector $d \in \real^n$ the following hold for the closed-loop system \eqref{Eq:LtiSystem} and \eqref{Eq:NonlinearController}:
  \begin{enumerate}[(i)]
  \item the closed-loop system has a globally asymptotically stable equilibrium point;
  \item for any initial condition $(x(0),x_{\rm c}(0)) \in \real^n \times \real^{n_{\rm c}}$, $\lim\limits_{t\rightarrow \infty} (y(t),u(t))$ is an optimizer of \eqref{Eq:OptimizationProblem}.\envend
  \end{enumerate}
\end{problem}

\smallskip

As the control system operates in real time, the disturbance $d$ may change, and each change in its value changes the optimal operating point of \eqref{Eq:OptimizationProblem} by modifying the feasible set. Problem \ref{Prob:Oss} specifies that the controller must maintain stability and automatically guide the plant to the new optimizer.

We make a number of assumptions that are necessary for the OSS control problem to be feasible.

\smallskip

\begin{assumption}[\bf Stabilizable Plant]\label{Assump:Stabilizable}
  The matrix pair $(A,B)$ is stabilizable.\envend
\end{assumption}

\smallskip

By the Popov-Belevitch-Hautus test, $(A,B)$ is stabilizable if and only if $\rank\begin{bmatrix}A-\lambda I_n&B\end{bmatrix} = n$ for all complex numbers $\lambda$ with nonnegative real part \cite[Section 14.3]{JPH:09}. The OSS control problem is infeasible if the plant fails to be stabilizable, as a lack of internal stability precludes reaching steady-state from an arbitrary initial condition.

\smallskip

\begin{remark}[\bf Nonempty Feasible Set]\label{Rem:NonemptyFeasibleSet}
Assumption \ref{Assump:Stabilizable} implies the feasible set of \eqref{Eq:OptimizationProblem} is non-empty. Take $\lambda = 0$ in the definition of stabilizability to obtain $\rank\begin{bmatrix}A&B\end{bmatrix} = n$. It follows that a solution $(\bar{x},\bar{u})$ of $A\bar{x} + B\bar{u} + d = \vzeros[n]$ exists for any $d \in \real^n$. \envend
\end{remark}

\smallskip

\begin{assumption}[\bf Optimizer Exists]\label{Assump:SolutionExists}
  For each $d \in \real^n$, an optimizer of \eqref{Eq:OptimizationProblem} exists.\envend
\end{assumption}

\smallskip

The OSS control problem is infeasible if the optimization problem has no solution, since the desired steady-state operating point does not exist.

With these necessary assumptions in place, we move on to controller design. The OSS control problem is composed of two sub-problems. The first sub-problem is to design a controller that establishes the optimizers of \eqref{Eq:OptimizationProblem} as the only equilibrium points of the closed-loop system. The second sub-problem is to ensure closed-loop stability. In Section \ref{Sec:ControllerDesign} we present a controller that satisfies the equilibrium criterion, then in Section \ref{Sec:Stabilization} we discuss stabilization.

\section{OSS Controller Design}\label{Sec:ControllerDesign}
We will first examine the \emph{full-state measurement case} when $y = x$, and then show how to extend the results to the \emph{measurement case} when $y = Cx$. To distinguish the former from the latter, we denote the cost function in the optimization problem by $f(\bar{x},\bar{u})$ instead of $g(\bar{y},\bar{u})$. The desired steady-state operating point is determined by
\begin{equation}\label{Eq:OptimizationProblemFullInformation}
\begin{aligned}
  \minimize_{\bar{x},\bar{u}} &\quad f(\bar{x},\bar{u})\\
  \subto &\quad A\bar{x}+B\bar{u}+d=\vzeros[n]\,.
\end{aligned}
\end{equation}
We begin by discussing the optimality conditions associated with the problem \eqref{Eq:OptimizationProblemFullInformation}.

\subsection{Subspace Formulation of Karush-Kuhn-Tucker Conditions}\label{Sec:KktConditions}
For a convex optimization problem, the Karush-Kuhn-Tucker (KKT) conditions are a set of necessary and sufficient conditions for a point to be a global optimizer, provided \emph{strong duality} holds. For a convex optimization problem with affine constraints only \textemdash{} like problems \eqref{Eq:OptimizationProblem} and \eqref{Eq:OptimizationProblemFullInformation} \textemdash{} strong duality holds as long as the feasible set is nonempty and the domain of the objective function is open \cite[Section 5.2.3]{SB-LV:04}, both of which follow from our assumptions.

The KKT conditions for \eqref{Eq:OptimizationProblemFullInformation} state: $(x^\star,u^\star)$ is a global optimizer if and only if there exists a $\lambda^\star \in \real^n$ such that
\begin{subequations}\label{Eq:KktOriginal}
  \begin{align}
    Ax^\star+Bu^\star+d &= \vzeros[n]\label{SubEq:KktFeasibility}\\
    \nabla f(x^\star,u^\star) + \begin{bmatrix}A&B\end{bmatrix}^{\sf T}\lambda^\star &= \vzeros[n+m]\,.\label{SubEq:KktGradient}
  \end{align}
\end{subequations}
The first condition \eqref{SubEq:KktFeasibility} is simply feasibility of $(x^\star,u^\star)$. The second condition \eqref{SubEq:KktGradient} is the \emph{gradient condition}, with $\lambda^\star$ the vector of \emph{dual variables} associated with the constraints of the optimization problem.

We can achieve steady-state optimality by enforcing the KKT conditions \eqref{Eq:KktOriginal} in equilibrium using a feedback controller. Previous designs in the literature enforce constraints using controller states that act as dual variables; consider \cite[Equation (4a)]{AJ-ML-PPJvdB:09b}, or \cite[Equation (8f)]{XZ-AP-NL:18}. However, if the closed-loop system is internally stable, the plant dynamics already enforce the constraint \eqref{SubEq:KktFeasibility} in steady-state; this suggests that dynamic dual variables are unnecessary. We can dispense with dual variables entirely and simplify the design procedure with an alternative interpretation of the KKT conditions.

The gradient condition \eqref{SubEq:KktGradient} is equivalent to the statement $\nabla f(x^\star,u^\star) \in \range \begin{bmatrix}A&B\end{bmatrix}^{\sf T}$. By a fundamental theorem from linear algebra, $\nabla f(x^\star,u^\star) \in \range \begin{bmatrix}A&B\end{bmatrix}^{\sf T}$ if and only if $\nabla f(x^\star,u^\star) \in \left(\nullspace \begin{bmatrix}A&B\end{bmatrix}\right)^{\perp}$, where the superscript $\perp$ denotes the orthogonal complement \cite[Theorem 7.7]{SA:15}. Hence, the KKT conditions \eqref{Eq:KktOriginal} can be equivalently written: $(x^\star,u^\star)$ is a global optimizer if and only if
\begin{subequations}\label{Eq:KktAlternative}
  \begin{align}
    Ax^\star+Bu^\star+d &= \vzeros[n]\label{SubEq:KktFeasibilityAlternative}\\
    \nabla f(x^\star,u^\star) &\perp \nullspace \begin{bmatrix}A&B\end{bmatrix}\,.\label{SubEq:KktGradientAlternative}
  \end{align}
\end{subequations}
The condition \eqref{SubEq:KktGradientAlternative} can be interpreted geometrically as saying that the gradient $\nabla f(x^\star,u^\star)$ is orthogonal to the subspace of first-order feasible variations at an optimal point; see \cite[Section 3.1]{DPB:95b}. This alternative formulation of the KKT conditions does not explicitly require dual variables, and we can enforce \eqref{SubEq:KktGradientAlternative} in equilibrium using a well-chosen matrix in the controller, as we describe next.

\subsection{Controller Equations in the Full-State Measurement Case}\label{Sec:ControllerEquationsFullInformation}
To enforce \eqref{SubEq:KktGradientAlternative} in equilibrium, the controller must drive the component of $\nabla f$ in the subspace $\nullspace \begin{bmatrix}A&B\end{bmatrix}$ to zero. The core idea of our control strategy is to make this component of $\nabla f$ the error signal input to a servo-compensator \cite{ED:76} with integral action; here we will simply use a proportional-integral controller.

Since $\rank\begin{bmatrix}A&B\end{bmatrix}=n$ by Assumption \ref{Assump:Stabilizable}, it follows from the rank-nullity theorem that $\dim \nullspace \begin{bmatrix}A&B\end{bmatrix} = m$. Let $\{v_1,v_2,\ldots,v_m\}$ be a basis of $\nullspace \begin{bmatrix}A&B\end{bmatrix}$ and concatenate these vectors to form the matrix $\gradmat \in \real^{m \times (n+m)}$ as $\gradmat \define \begin{bmatrix}v_1&v_2&\cdots&v_m\end{bmatrix}$. We then have
\begin{equation}\label{Eq:GradMatrix}
  \nullspace \gradmat^{\sf T} = \left(\nullspace \begin{bmatrix}A&B\end{bmatrix}\right)^\perp\,.
\end{equation}
We take $e \define -\gradmat^{\sf T}\nabla f$ as the error signal. We can interpret $\gradmat^{\sf T}$ as extracting the component of $\nabla f$ in the subspace $\nullspace \begin{bmatrix}A&B\end{bmatrix}$, which must be driven to zero to achieve optimality.

The equations of the system in feedback with the controller in the full-state measurement case are
\begin{subequations}\label{Eq:Controller}
  \begin{align}
    \dot{x} &= Ax+Bu+d \label{SubEq:PlantDynamics}\\
    e &= -\gradmat^{\sf T}\nabla f(x,u) \label{SubEq:GradientError}\\
    \dot{\eta} &= e \label{SubEq:Integrator}\\
    u &= K_I\eta + K_Pe\,. \label{SubEq:Input}
  \end{align}
\end{subequations}
A block diagram for the closed-loop system is shown in Figure \ref{Fig:OssController}. Equation \eqref{SubEq:GradientError} is the error term which the controller brings to zero using proportional-integral control. The integrator is given by \eqref{SubEq:Integrator}. The input equation \eqref{SubEq:Input} contains proportional and integral gains $K_P,K_I \in \real^{m \times m}$, where $K_I$ is assumed to be invertible. We assume going forward that the closed-loop system is well-posed, meaning that for any $(x, \eta)$, the equations \eqref{SubEq:GradientError} and \eqref{SubEq:Input} can always be solved for a unique value of $u$.

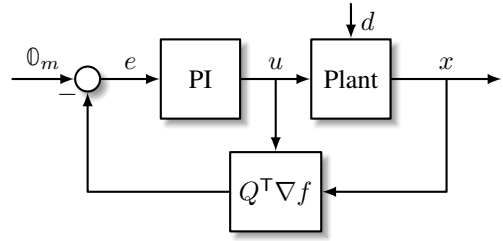
\begin{figure}[t]
\begin{center}
\tikzstyle{block} = [draw, fill=white, rectangle, 
    minimum height=3em, minimum width=6em, blur shadow={shadow blur steps=5}]
    \tikzstyle{hold} = [draw, fill=white, rectangle, 
    minimum height=3em, minimum width=3em, blur shadow={shadow blur steps=5}]
    \tikzstyle{dzblock} = [draw, fill=white, rectangle, minimum height=3em, minimum width=4em, blur shadow={shadow blur steps=5},
path picture = {
\draw[thin, black] ([yshift=-0.1cm]path picture bounding box.north) -- ([yshift=0.1cm]path picture bounding box.south);
\draw[thin, black] ([xshift=-0.1cm]path picture bounding box.east) -- ([xshift=0.1cm]path picture bounding box.west);
\draw[very thick, black] ([xshift=-0.5cm]path picture bounding box.east) -- ([xshift=0.5cm]path picture bounding box.west);
\draw[very thick, black] ([xshift=-0.5cm]path picture bounding box.east) -- ([xshift=-0.1cm, yshift=+0.4cm]path picture bounding box.east);
\draw[very thick, black] ([xshift=+0.5cm]path picture bounding box.west) -- ([xshift=+0.1cm, yshift=-0.4cm]path picture bounding box.west);
}]
\tikzstyle{sum} = [draw, fill=white, circle, node distance=1cm, blur shadow={shadow blur steps=8}]
\tikzstyle{input} = [coordinate]
\tikzstyle{output} = [coordinate]
\tikzstyle{split} = [coordinate]
\tikzstyle{pinstyle} = [pin edge={to-,thin,black}]

\begin{tikzpicture}[auto, scale = 0.6, node distance=2cm,>=latex', every node/.style={scale=1}]
  \node [input, name=input] {};
  \node [sum, right of=input] (sum) {};
  \node [hold, right of=sum, node distance=1.5cm] (pi) {PI};
  \node [hold, right of=pi, node distance=2cm] (plant) {Plant};
  \node [split, right of=pi, node distance=1cm] (split) {};
  \node [input, above of=plant, node distance=1cm] (disturbance) {};
  \node [hold, below of=split, node distance=1.5cm] (grad)  {$\gradmat^{\sf T}\nabla f$};
  \node [output, right of=plant, node distance=2cm] (output) {};
  
  Once the nodes are placed, connecting them is easy. 
  \draw [draw, -latex] (input) -- node {$\vzeros[m]$} (sum);
  \draw [draw, -latex] (sum) -- node {$e$} (pi);
  \draw [draw, -latex] (pi) -- node [name=u] {$u$} (plant);
  \draw [draw, -latex] (plant) -- node [name=y] {$x$} (output);
  \draw [draw, -latex] (y) |- (grad);
  \draw [draw, -latex] (grad) -| (sum) node[pos=0.99] {$-$};
  \draw [draw, -latex] (disturbance) -- node {$d$} (plant);
  \draw [draw, -latex] (split) -- (grad);
\end{tikzpicture}
\end{center}
\caption{Block diagram of the plant and a feedback controller that solves the OSS control problem in the full-state measurement case.}
\label{Fig:OssController}
\end{figure}

We now show the closed-loop system achieves optimality in steady-state. For a constant disturbance $d$, the equilibrium points $(\bar{x},\bar{\eta})$ of the closed-loop system \eqref{Eq:Controller} are the solutions of
\begin{subequations}\label{Eq:ControllerEquilibrium}
  \begin{align}
    \vzeros[n] &= A\bar{x}+B\bar{u}+d \label{SubEq:PlantDynamicsEquilibrium}\\
    \vzeros[m] &= \gradmat^{\sf T}\nabla f(\bar{x},\bar{u}) \label{SubEq:GradientErrorEquilibrium}\\
    \bar{u} &= K_I\bar{\eta}\,.
  \end{align}
\end{subequations}
Define the set of equilibrium state-input pairs as
\begin{equation}\label{Eq:EquilibriumSet}
  \mathcal{E} \define  \setdef{(\bar{x},\bar{u})}{(\bar{x},\bar{\eta}) = (\bar{x},K_I^{-1}\bar{u}) \,\,\text{solves \eqref{Eq:ControllerEquilibrium}}}
\end{equation}
and let the set of optimizers for the optimization problem be
\begin{equation}\label{Eq:OptimizerSet}
  \chi \define \setdef{(x^\star,u^\star) \in \real^n \times \real^m}{(x^\star,u^\star)\,\, \text{solves \eqref{Eq:OptimizationProblemFullInformation}}}\,.
\end{equation}

\smallskip

\begin{theorem}[\bf Equilibria and Optimizers]\label{Thm:EquilibriaAndOptimizers}
  The sets $\mathcal{E}$ and $\chi$ of \eqref{Eq:EquilibriumSet} and \eqref{Eq:OptimizerSet} satisfy $\mathcal{E} = \chi$.
\end{theorem}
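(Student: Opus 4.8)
The plan is to prove the set equality $\mathcal{E} = \chi$ by showing that the conditions defining membership in each set are logically equivalent, so that each inclusion follows by unpacking and matching those conditions. The conceptual work has already been done in deriving the alternative KKT formulation \eqref{Eq:KktAlternative} and in establishing the subspace identity \eqref{Eq:GradMatrix}; consequently the argument reduces to careful bookkeeping.

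First I would unpack the definition of $\mathcal{E}$ in \eqref{Eq:EquilibriumSet}. A pair $(\bar{x},\bar{u})$ lies in $\mathcal{E}$ precisely when $(\bar{x},K_I^{-1}\bar{u})$ solves the equilibrium system \eqref{Eq:ControllerEquilibrium}. Substituting $\bar{\eta} = K_I^{-1}\bar{u}$ into the third equation gives $\bar{u} = K_I K_I^{-1}\bar{u} = \bar{u}$, which holds identically; the invertibility of $K_I$ is exactly what makes this substitution well-defined. Thus the third equation imposes no constraint beyond defining $\bar{\eta}$, and membership $(\bar{x},\bar{u}) \in \mathcal{E}$ is equivalent to the two conditions $A\bar{x}+B\bar{u}+d = \vzeros[n]$ and $\gradmat^{\sf T}\nabla f(\bar{x},\bar{u}) = \vzeros[m]$.

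Next I would compare these two conditions against the characterization of $\chi$. By Assumptions \ref{Assump:Stabilizable} and \ref{Assump:SolutionExists} strong duality holds, so $(\bar{x},\bar{u}) \in \chi$ if and only if the alternative KKT conditions \eqref{Eq:KktAlternative} hold. The feasibility equation \eqref{SubEq:KktFeasibilityAlternative} is identical to the first equilibrium condition, so feasibility matches trivially. For the gradient conditions I would invoke \eqref{Eq:GradMatrix}: the equation $\gradmat^{\sf T}\nabla f(\bar{x},\bar{u}) = \vzeros[m]$ says exactly that $\nabla f(\bar{x},\bar{u}) \in \nullspace \gradmat^{\sf T}$, and \eqref{Eq:GradMatrix} identifies $\nullspace \gradmat^{\sf T}$ with $\bigl(\nullspace \begin{bmatrix}A&B\end{bmatrix}\bigr)^\perp$. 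Hence $\gradmat^{\sf T}\nabla f = \vzeros[m]$ is equivalent to $\nabla f \perp \nullspace \begin{bmatrix}A&B\end{bmatrix}$, which is precisely \eqref{SubEq:KktGradientAlternative}. Chaining these equivalences yields $(\bar{x},\bar{u}) \in \mathcal{E}$ if and only if $(\bar{x},\bar{u}) \in \chi$, establishing $\mathcal{E} = \chi$.

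I do not expect a genuine obstacle, since the substance resides entirely in the earlier derivation of \eqref{Eq:KktAlternative} and the construction of $\gradmat$. The only points requiring care are confirming that the auxiliary variable $\bar{\eta}$ introduces no spurious constraint, which relies on $K_I$ being invertible, and ensuring the biconditional with the orthogonality condition is read in both directions: because the columns of $\gradmat$ span \emph{all} of $\nullspace \begin{bmatrix}A&B\end{bmatrix}$, the equation $\gradmat^{\sf T}\nabla f = \vzeros[m]$ captures orthogonality to the entire null space rather than to a proper subspace of it.
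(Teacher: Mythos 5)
Your proposal is correct and follows essentially the same route as the paper's proof: both arguments rest on identifying the equilibrium equations \eqref{SubEq:PlantDynamicsEquilibrium}--\eqref{SubEq:GradientErrorEquilibrium} with the alternative KKT conditions \eqref{Eq:KktAlternative} via the subspace identity \eqref{Eq:GradMatrix}, and then invoking the necessity and sufficiency of the KKT conditions under strong duality. The only difference is presentational—you chain biconditionals where the paper proves the two inclusions $\chi \subset \mathcal{E}$ and $\mathcal{E} \subset \chi$ separately—and your explicit remarks that the third equilibrium equation is vacuous given invertibility of $K_I$, and that $\gradmat$ spans the full null space, are points the paper leaves implicit.
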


\begin{proof}
  First note that \eqref{SubEq:PlantDynamicsEquilibrium} and \eqref{SubEq:GradientErrorEquilibrium} are equivalent to the KKT conditions \eqref{Eq:KktAlternative}. The former is the feasibility condition \eqref{SubEq:KktFeasibilityAlternative}, and the latter implies \eqref{SubEq:KktGradientAlternative} since
   \begin{equation*}
    \nabla f(\bar{x},\bar{u}) \in \nullspace \gradmat^{\sf T} = \left(\nullspace \begin{bmatrix}A&B\end{bmatrix}\right)^\perp\,.
  \end{equation*}
  We show that $\chi \subset \mathcal{E}$. The set of optimizers $\chi$ is non-empty by Assumption \ref{Assump:SolutionExists}, so there exists a $(x^\star,u^\star) \in \chi$. Since the KKT conditions are necessary for optimality, $(x^\star,u^\star)$ satisfy \eqref{SubEq:PlantDynamicsEquilibrium} and \eqref{SubEq:GradientErrorEquilibrium}. Therefore, $(\bar{x},\bar{\eta}) = (x^\star,K_I^{-1}u^\star)$ is a solution of \eqref{Eq:ControllerEquilibrium}. It follows that $\chi \subset \mathcal{E}$, which also implies $\mathcal{E}$ is non-empty. Conversely, we now show $\mathcal{E} \subset \chi$. If $(\bar{x},\bar{u}) \in \mathcal{E}$, then $\bar{x}$ and $\bar{u}$ satisfy \eqref{SubEq:PlantDynamicsEquilibrium} and \eqref{SubEq:GradientErrorEquilibrium}. The KKT conditions are sufficient for optimality, thus $(\bar{x},\bar{u})$ must be a global minimizer of \eqref{Eq:OptimizationProblemFullInformation}. Therefore $\mathcal{E} \subset \chi$.
\end{proof}

\smallskip

Before moving on to the measurement case, let us make two comments regarding the matrix $\gradmat$.

First, the choice of $\gradmat$ is not unique; the only required property is \eqref{Eq:GradMatrix}. While a detailed study is outside the present scope, the choice of $\gradmat$ clearly affects the performance of the closed-loop system, and the sparsity patterns of different choices can provide flexibility for distributed implementations of OSS controllers. The construction of $\gradmat$ relies on knowledge of the plant matrices $A$ and $B$; this raises the question of what will happen if $A$ and $B$ are not known accurately. Suppose the actual plant matrices are $\tilde{A}$ and $\tilde{B}$. The controller \eqref{Eq:Controller} still guides the system to the optimizer of
\begin{align*}
  \minimize_{\bar{x},\bar{u}} &\quad f(\bar{x},\bar{u})\\
  \subto &\quad \tilde{A}\bar{x}+\tilde{B}\bar{u}+d=\vzeros[n]
\end{align*}
if $\nullspace\begin{bmatrix}A&B\end{bmatrix} = \nullspace\begin{bmatrix}\tilde{A}&\tilde{B}\end{bmatrix}$ and if the closed-loop system remains internally stable. We defer a more complete robustness analysis to a future paper.

Second, consider the advantage of enforcing the KKT conditions using the matrix $\gradmat$ over enforcing the KKT conditions using dual variables as employed in \cite{AJ-ML-PPJvdB:09b} or \cite{XZ-AP-NL:18} for example. The latter approach requires additional controller states representing the dual variables. Our alternative strategy omits such additional states, resulting in a lower-order controller.

\subsection{Controller Equations in the Measurement Case}\label{Sec:ControllerEquationsMeasurement}
We now return to the original input-output optimization problem \eqref{Eq:OptimizationProblem} with measurements $y = Cx$ and frame the measurement case as an instance of the full-state measurement case by defining $f(x,u) \coloneqq g(Cx,u)$. It is straightforward to prove from the definition of a convex function that $f$ is convex whenever $g$ is convex. We then apply the controller \eqref{Eq:Controller} with
\begin{equation*}
  \nabla f(x,u) = \begin{bmatrix}C^{\sf T}&\vzeros[]\\\vzeros[]&I_m\end{bmatrix}\nabla g(y,u)\,,
\end{equation*}
by the multidimensional chain rule. The closed-loop system becomes
\begin{subequations}\label{Eq:ControllerOutput}
  \begin{align}
    \dot{x} &= Ax+Bu+d\\
    y &= Cx\\
    e &= -R^{\sf T}\nabla g(y,u)\\
    \dot{\eta} &= e\\
    u &= K_I\eta + K_Pe\,, \label{SubEq:ControllerOutputPI}
  \end{align}
\end{subequations}
where
\begin{equation}\label{Eq:Py}
  R \define \begin{bmatrix}C&\vzeros[]\\\vzeros[]&I_m\end{bmatrix}\gradmat\,.
\end{equation}
Theorem \ref{Thm:EquilibriaAndOptimizers} holds for \eqref{Eq:ControllerOutput}, and under additional assumptions on the plant and the objective function $g$ we can further show uniqueness of the closed-loop equilibrium point.

\smallskip

\begin{proposition}[\bf Unique Equilibrium Point]\label{Prop:UniqueEquilibrium}
  Suppose the objective function $g$ is strictly convex and $(C,A)$ is detectable. Then for each $d \in \real^n$ the closed-loop system \eqref{Eq:ControllerOutput} has a unique equilibrium point.
\end{proposition}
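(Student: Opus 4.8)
The plan is to leverage Theorem~\ref{Thm:EquilibriaAndOptimizers}, which applies to the measurement-case closed loop \eqref{Eq:ControllerOutput}, to reduce the claim to a statement about the optimizer set. Since $K_I$ is invertible, any equilibrium $(\bar{x},\bar{\eta})$ satisfies $\bar{\eta} = K_I^{-1}\bar{u}$, so the equilibrium point is uniquely determined by the pair $(\bar{x},\bar{u})$; by Theorem~\ref{Thm:EquilibriaAndOptimizers} the set of such pairs is exactly $\chi$, the set of optimizers of the full-state problem \eqref{Eq:OptimizationProblemFullInformation} with $f(x,u) = g(Cx,u)$. Existence of an equilibrium follows from Assumption~\ref{Assump:SolutionExists}, so it remains only to show that $\chi$ is a singleton. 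The essential difficulty is that strict convexity of $g$ does \emph{not} imply strict convexity of $f(x,u) = g(Cx,u)$, because $C$ need not be injective; strict convexity alone therefore cannot pin down $\bar{x}$. I would resolve this with a two-stage argument: strict convexity fixes the output-input pair $(\bar{y},\bar{u})$, and detectability then fixes the state $\bar{x}$.

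For the first stage, let $(\bar{x}_1,\bar{u}_1)$ and $(\bar{x}_2,\bar{u}_2)$ be two optimizers and write $\bar{y}_i = C\bar{x}_i$. The feasible set of \eqref{Eq:OptimizationProblemFullInformation} is the affine set $\setdef{(x,u)}{Ax+Bu+d = \vzeros[n]}$, and its image under the linear map $(x,u)\mapsto(Cx,u)$ is again an affine, hence convex, set which I call $\mathcal{F}_{yu}$. The problem is equivalent to minimizing $g$ over $\mathcal{F}_{yu}$, and since $g$ is strictly convex it has at most one minimizer there. Consequently $(\bar{y}_1,\bar{u}_1) = (\bar{y}_2,\bar{u}_2)$, giving $\bar{u}_1 = \bar{u}_2$ and $C\bar{x}_1 = C\bar{x}_2$.

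For the second stage, set $\delta = \bar{x}_1 - \bar{x}_2$. Subtracting the feasibility equations $A\bar{x}_i + B\bar{u}_i + d = \vzeros[n]$ and using $\bar{u}_1 = \bar{u}_2$ gives $A\delta = \vzeros[n]$, while $C\bar{x}_1 = C\bar{x}_2$ gives $C\delta = \vzeros[p]$; thus $\delta \in \nullspace\begin{bmatrix}A\\C\end{bmatrix}$. Detectability of $(C,A)$, via the Popov--Belevitch--Hautus test evaluated at $\lambda = 0$ (which has nonnegative real part), yields $\rank\begin{bmatrix}A\\C\end{bmatrix} = n$, so this null space is trivial and $\delta = \vzeros[n]$. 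Hence $\bar{x}_1 = \bar{x}_2$, the set $\chi$ is a singleton, and the equilibrium point is unique. The main obstacle throughout is the loss of strict convexity under composition with $C$; the payoff of the detectability hypothesis is precisely the injectivity of $\begin{bmatrix}A\\C\end{bmatrix}$ that repairs it.
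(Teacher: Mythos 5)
Your proof is correct and takes essentially the same route as the paper's: strict convexity of $g$ forces uniqueness of the optimal $(\bar{y},\bar{u})$, detectability of $(C,A)$ gives $\rank\begin{bmatrix}A\\C\end{bmatrix}=n$ and hence uniqueness of the corresponding $\bar{x}$, and Theorem~\ref{Thm:EquilibriaAndOptimizers} then identifies the singleton optimizer set with the equilibrium set. The only difference is one of detail: you justify the uniqueness of $(\bar{y},\bar{u})$ explicitly via the affine image set $\mathcal{F}_{yu}$ (where the paper cites Boyd--Vandenberghe) and phrase the state-uniqueness step as triviality of $\nullspace\begin{bmatrix}A\\C\end{bmatrix}$, which is the same rank fact the paper invokes.
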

\begin{proof}
  If $g$ is strictly convex, then for every optimizer $(x^\star,y^\star,u^\star)$ of \eqref{Eq:OptimizationProblem}, $y^\star$ and $u^\star$ are unique \cite[Section 4.2.1]{SB-LV:04}. If, additionally, $(C,A)$ is detectable then $\rank\begin{bmatrix}A\\C\end{bmatrix} = n$; it follows that the corresponding state $x^\star$ satisfying
  \begin{equation*}
    \begin{aligned}
      Ax^\star &= -Bu^\star-d\\
      Cx^\star &= y^\star
    \end{aligned}
  \end{equation*}
  is unique. Therefore, the set $\chi$ of \eqref{Eq:OptimizerSet} is a singleton. Since $\chi = \mathcal{E}$ by Theorem \ref{Thm:EquilibriaAndOptimizers}, we conclude \eqref{Eq:ControllerOutput} has a unique equilibrium point.
\end{proof}

\smallskip

We have so far shown that our controller solves the first sub-problem of the OSS control problem; namely, the equilibrium points of the closed-loop system are optimal. We turn our attention to the second sub-problem, that of stabilizing the closed-loop system.

\section{Stabilization}\label{Sec:Stabilization}
The proportional and integral gain matrices $K_P$ and $K_I$ may be tuned ``by hand,'' using a stability criterion to determine whether the arbitrarily chosen gains result in a stable system; alternatively, we might consider employing a synthesis procedure to design a stabilizing controller. The former is simpler, and we can continue to use the PI structure we have been discussing up to this point. The latter is more complex, requiring a \emph{dynamic stabilizer}, but systematizes gain tuning. We consider each strategy in turn.

\subsection{PI Controller Stability Analysis}
If we wish to select the gain matrices $K_P$ and $K_I$ heuristically, we require a method of assessing whether the resulting closed-loop system is stable. The only nonlinearity in the closed-loop system \eqref{Eq:ControllerOutput} is the memoryless operator $\nabla g$. If the assumptions of Proposition \ref{Prop:UniqueEquilibrium} hold, and a unique equilibrium point exists, then we can put the closed-loop system in exactly the form of the \emph{absolute stability problem} \cite[Section 7.1]{HKK:02} and apply the corresponding analysis tools. We therefore assume $(C,A)$ is detectable and $g$ is strictly convex in this section.

We centre the equations \eqref{Eq:ControllerOutput} about the unique (optimal) equilibrium point $(x^\star,\eta^\star)$  by defining the new state variables $\tilde{x} \define x-x^\star$ and $\tilde{\eta} \define \eta-\eta^\star$. We further define $y^\star \define Cx^\star$ and $u^\star \define K_I\eta^\star$. Straightforward calculations show that this change-of-variables results in the feedback interconnection of the memoryless operator
\begin{equation*}
  \Phi(\tilde{y},\tilde{u}) := \nabla g(\tilde{y}+y^\star,\tilde{u}+u^\star) - \nabla g(y^\star,u^\star)
\end{equation*}
and the LTI system $H$, which collects all the linear elements of \eqref{Eq:ControllerOutput}. The system $H$ has the realization
\begin{equation*}
  H = \left[
    \begin{array}{c|c}
      \mathcal{A}&\mathcal{B}\\\hline
      \mathcal{C}&\mathcal{D}
    \end{array}
  \right]
  \define
  \left[
    \begin{array}{c|c}
      \begin{array}{cc}
        A&BK_I\\\vzeros[]&\vzeros[]
      \end{array}&\begin{array}{cc}
                    BK_PR^{\sf T}\\R^{\sf T}
                  \end{array}\\\hline
      \begin{array}{cc}
        C&\vzeros[]\\\vzeros[]&K_I
      \end{array}&\begin{array}{c}
                    \vzeros[]\\K_PR^{\sf T}
                  \end{array}
    \end{array}
  \right]\,.
\end{equation*}

\begin{figure}[ht!]
\begin{center}
\tikzstyle{block} = [draw, fill=white, rectangle, 
    minimum height=3em, minimum width=6em, blur shadow={shadow blur steps=5}]
    \tikzstyle{hold} = [draw, fill=white, rectangle, 
    minimum height=3em, minimum width=3em, blur shadow={shadow blur steps=5}]
    \tikzstyle{dzblock} = [draw, fill=white, rectangle, minimum height=3em, minimum width=4em, blur shadow={shadow blur steps=5},
path picture = {
\draw[thin, black] ([yshift=-0.1cm]path picture bounding box.north) -- ([yshift=0.1cm]path picture bounding box.south);
\draw[thin, black] ([xshift=-0.1cm]path picture bounding box.east) -- ([xshift=0.1cm]path picture bounding box.west);
\draw[very thick, black] ([xshift=-0.5cm]path picture bounding box.east) -- ([xshift=0.5cm]path picture bounding box.west);
\draw[very thick, black] ([xshift=-0.5cm]path picture bounding box.east) -- ([xshift=-0.1cm, yshift=+0.4cm]path picture bounding box.east);
\draw[very thick, black] ([xshift=+0.5cm]path picture bounding box.west) -- ([xshift=+0.1cm, yshift=-0.4cm]path picture bounding box.west);
}]
\tikzstyle{sum} = [draw, fill=white, circle, node distance=1cm, blur shadow={shadow blur steps=8}]
\tikzstyle{input} = [coordinate]
\tikzstyle{output} = [coordinate]
\tikzstyle{pinstyle} = [pin edge={to-,thin,black}]

\begin{tikzpicture}[auto, scale = 0.6, node distance=2cm,>=latex', every node/.style={scale=1}]
  \node [input, name=input] {};
  \node [sum, right of=input] (sum) {};
  \node [hold, right of=sum, node distance=1.5cm] (system) {$H$};
  \node [hold, below of=system, node distance=1.5cm] (nonlin)  {$\Phi$};
  \node [output, right of=system, node distance=2cm] (output) {};
  \draw [draw,->] (input) -- node {$\vzeros[p+m]$} (sum);
  \draw [draw,->] (sum) -- (system);
  \draw [draw,->] (system) -- node [name=y] {$(y,u)$}(output);
  \draw [draw,->] (y) |- (nonlin);
  \draw [draw,->] (nonlin) -| node[pos=0.99] {$-$}
  node [near end] {} (sum);
\end{tikzpicture}
\end{center}
\caption{
  The feedback interconnection of Figure \ref{Fig:OssController} in the standard form of the absolute stability problem. The linear system $H$ collects all the linear elements and the nonlinearity $\Phi$ is a memoryless operator satisfying (incremental) sector bounds.}
\label{Fig:OssControllerStability}
\end{figure}
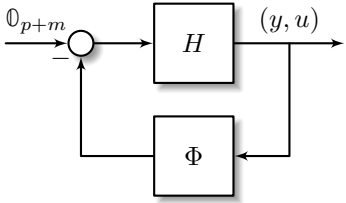

The operator $\Phi$ is a static nonlinearity in the sector $[0,\infty)$ owing to the convexity of $g$; if, furthermore, $g$ is strongly convex with parameter $\kappa$ and $\nabla g$ is Lipschitz continuous with parameter $L$, then $\Phi$ is in the sector $[\kappa,L]$.\footnote{Sector-boundedness is a property of the input-output behaviour of a static nonlinearity; see \cite[Definition 6.2]{HKK:02} for details.} The feedback interconnection of $H$ and $\Phi$, depicted in Figure \ref{Fig:OssControllerStability}, is therefore precisely in the form of the absolute stability problem. We present a stability criterion in the form of a \emph{linear matrix inequality} (LMI). For background on LMIs, see \cite{SB-LEG-EF-VB:94}. The following proposition is equivalent to \cite[Theorem 7.1]{HKK:02}, but has the advantage of being easy to verify with a program like \texttt{CVX} or \texttt{YALMIP}.

\smallskip

\begin{proposition}[\bf OSS Stability Criterion]\label{Prop:CircleCriterion}
  Suppose $g$ is strongly convex with parameter $\kappa > 0$ and $\nabla g$ is Lipschitz continuous with parameter $L > 0$. Let
  \begin{equation*}
    M \define \left(\begin{bmatrix}1&0\\0&-1\end{bmatrix}^{\sf T}\begin{bmatrix}-2\kappa L & \kappa+L\\\kappa+L & -2\end{bmatrix}\begin{bmatrix}1&0\\0&-1\end{bmatrix}\right) \otimes I_{p+m}\,,
  \end{equation*}
  where $\otimes$ denotes the Kronecker product. The unique equilibrium point of \eqref{Eq:ControllerOutput} is globally asymptotically stable if there exists a symmetric matrix $P = P^{\sf T} \in \real^{(n+m)\times(n+m)}$ and a real number $\alpha \geq 0$ such that
  \begin{equation*}
    \begin{bmatrix}I_{n+m}&\vzeros[]\\\mathcal{A}&\mathcal{B}\\\mathcal{C}&\mathcal{D}\\\vzeros[]&I_{p+m}\end{bmatrix}^{\sf T}\begin{bmatrix}\vzeros[]&P&\vzeros[]\\P&\vzeros[]&\vzeros[]\\\vzeros[]&\vzeros[]&\alpha M\end{bmatrix}\begin{bmatrix}I_{n+m}&\vzeros[]\\\mathcal{A}&\mathcal{B}\\\mathcal{C}&\mathcal{D}\\\vzeros[]&I_{p+m}\end{bmatrix} \prec \vzeros[]\,.
  \end{equation*}
\end{proposition}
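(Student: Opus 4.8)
The plan is to certify global asymptotic stability of the unique equilibrium (whose existence and uniqueness are supplied by Proposition~\ref{Prop:UniqueEquilibrium}) by exhibiting the quadratic Lyapunov function $V(\xi) = \xi^{\sf T} P \xi$ for the centred system, where $\xi = (\tilde{x},\tilde{\eta})$ is the state of $H$, and by combining its derivative with the quadratic constraint that the sector bound on $\Phi$ imposes, in the manner of the S-procedure. First I would give the LMI its trajectory interpretation. Writing $w$ for the signal injected into $H$ along the feedback path and $\zeta \define \mathcal{C}\xi+\mathcal{D}w$ for the output of $H$ driving $\Phi$, and multiplying the matrix inequality on the right by $\begin{bmatrix}\xi\\ w\end{bmatrix}$ and on the left by its transpose, the four block-rows of the tall matrix evaluate to $\xi$, $\mathcal{A}\xi+\mathcal{B}w$, $\zeta$, and $w$, so the quadratic form reduces to
\begin{equation*}
  2\xi^{\sf T} P(\mathcal{A}\xi+\mathcal{B}w) + \alpha\begin{bmatrix}\zeta\\ w\end{bmatrix}^{\sf T} M \begin{bmatrix}\zeta\\ w\end{bmatrix} < 0
\end{equation*}
for every nonzero $\begin{bmatrix}\xi\\ w\end{bmatrix}$. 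Along any closed-loop trajectory $\dot{\xi}=\mathcal{A}\xi+\mathcal{B}w$ and $w$ is determined from $\zeta$ by $\Phi$ through the well-posed implicit relation, so the first term is precisely $\dot V$ and the argument of $M$ is the genuine input--output pair of the nonlinearity.

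Second, I would identify the $M$-term as the sector constraint. The $2\times 2$ matrix building $M$ is the sector-$[\kappa,L]$ multiplier conjugated by $\mathrm{diag}(1,-1)$, the conjugation encoding the negative-feedback sign; expanding the Kronecker product shows that $\begin{bmatrix}\zeta\\ w\end{bmatrix}^{\sf T} M \begin{bmatrix}\zeta\\ w\end{bmatrix}$ is a positive multiple of the incremental sector inequality for the loop nonlinearity, which holds because $\Phi$ lies in the sector $[\kappa,L]$ (established above from strong convexity of $g$ with parameter $\kappa$ and Lipschitz continuity of $\nabla g$ with parameter $L$). Hence this term is nonnegative on trajectories, and since $\alpha\geq 0$ the displayed inequality yields $\dot V < 0$ for all $\xi\neq 0$. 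As a useful by-product, setting $w=0$ isolates $\mathcal{A}^{\sf T} P+P\mathcal{A}-2\alpha\kappa L\,\mathcal{C}^{\sf T}\mathcal{C}\prec 0$; since $\mathcal{A}$ carries $m$ eigenvalues at the origin (from the integrator) it cannot be Hurwitz, which forces $\alpha>0$.

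The hard part will be establishing $P\succ 0$, so that $V$ is positive definite and radially unbounded and the inequality $\dot V<0$ certifies \emph{global} rather than merely local asymptotic stability; positivity is not automatic here precisely because $\mathcal{A}$ is not Hurwitz. I would obtain it from the detectability of $(\mathcal{C},\mathcal{A})$—which follows by a Popov--Belevitch--Hautus argument from the assumed detectability of $(C,A)$ together with the invertibility of $K_I$—combined with the relation $\mathcal{A}^{\sf T} P+P\mathcal{A}\prec 2\alpha\kappa L\,\mathcal{C}^{\sf T}\mathcal{C}$ noted above. Equivalently, and more cleanly, one may read the whole LMI as the Kalman--Yakubovich--Popov certificate of the strict positive-real condition that underlies the absolute stability theorem \cite[Theorem 7.1]{HKK:02}, which delivers a positive definite $P$ directly. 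With $P\succ 0$ in hand, positive definiteness and radial unboundedness of $V$, strict negativity of $\dot V$ away from the equilibrium, and uniqueness of that equilibrium combine through the standard global Lyapunov theorem to give the claim. Everything outside the positivity argument is a routine congruence computation together with the sector bound already in hand.
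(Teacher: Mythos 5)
You take a genuinely different route from the paper: the paper's entire proof is a one-line appeal to an off-the-shelf IQC/KYP stability test (Corollary 6 of the cited Veenman--Scherer--K\"{o}ro\u{g}lu survey, instantiated with the circle-criterion multiplier), whereas you attempt a self-contained Lyapunov/S-procedure argument. The first two-thirds of your argument is correct and is essentially the content hidden behind that citation: multiplying the LMI on both sides by $(\xi,w)$ gives $2\xi^{\sf T}P(\mathcal{A}\xi+\mathcal{B}w)+\alpha\, q(\zeta,w)<0$ with $q$ the $M$-quadratic form and $\zeta=\mathcal{C}\xi+\mathcal{D}w$; on closed-loop trajectories $w=-\Phi(\zeta)$ and $q(\zeta,w)=2\bigl(\Phi(\zeta)-\kappa\zeta\bigr)^{\sf T}\bigl(L\zeta-\Phi(\zeta)\bigr)\geq 0$ by the sector property, so $\dot{V}<0$; and your side observation that $\alpha>0$ is forced is also right.

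The genuine gap sits exactly where you said the hard part is, and neither of your two routes closes it. Route one fails on a sign: at $w=0$ the LMI gives $\mathcal{A}^{\sf T}P+P\mathcal{A}\prec 2\alpha\kappa L\,\mathcal{C}^{\sf T}\mathcal{C}$, with the output term on the \emph{wrong} side for a detectability argument; e.g.\ with $\mathcal{A}=0$, $\mathcal{C}=I$ (a detectable pair) this inequality holds for \emph{every} symmetric $P$, including negative definite ones, once $\alpha>0$. Route two is circular: the KYP lemma returns $P\succ 0$ only for transfer matrices that are SPR, and SPR-ness presupposes that the underlying state matrix is Hurwitz; here that matrix is the loop-shifted $\mathcal{A}_\kappa=\mathcal{A}-\kappa\mathcal{B}(I+\kappa\mathcal{D})^{-1}\mathcal{C}$, and its Hurwitzness is not implied by the LMI with sign-indefinite $P$ --- it is \emph{equivalent} to what you are trying to prove, since substituting the admissible linear feedback $w=-\kappa\zeta$ into the LMI yields $\mathcal{A}_\kappa^{\sf T}P+P\mathcal{A}_\kappa\prec 0$, whence by the Lyapunov inertia theorem (Ostrowski--Schneider) $P\succ 0$ if and only if $\mathcal{A}_\kappa$ is Hurwitz. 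That positivity cannot be conjured from the sector bound plus the LMI alone is shown by the data $\mathcal{A}=1$, $\mathcal{B}=1$, $\mathcal{C}=-1$, $\mathcal{D}=0$, $\kappa=1$, $L=3$: with $P=-1$, $\alpha=1$ the LMI quadratic form is $-8\xi^2+6\xi w-2w^2$, which is negative definite, yet the admissible nonlinearity $\Phi(\zeta)=\zeta$ (i.e.\ $g(z)=z^2/2$) gives $\dot{\xi}=2\xi$, which diverges. This data is not of the OSS form, so it does not refute the proposition itself, but it defeats any proof that uses only the sector property and the LMI, as yours does. To finish one must either impose $P\succ 0$ as an explicit constraint (after which your Lyapunov argument is complete), or establish that $\mathcal{A}_\kappa$ is Hurwitz and invoke the inertia theorem. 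Incidentally, this is also where the paper's citation quietly does real work: the cited IQC framework carries stability of the nominal linear part as a standing hypothesis, which $H$ --- containing $m$ integrators --- does not satisfy outright, and which after the standard loop shift is again precisely Hurwitzness of $\mathcal{A}_\kappa$; your instinct that this is the crux of the matter is therefore sound even though your proposal does not resolve it.
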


\smallskip

\begin{proof}
 Apply \cite[Corollary 6]{JV-CWS-HK:16} to the system under consideration with the circle criterion multiplier \cite[Class 12]{JV-CWS-HK:16}.
\end{proof}

\smallskip

\begin{remark}[\bf Form of Matrix {$\boldsymbol M$}]\label{Rem:FormOfR}
  The term in parentheses in the definition of $M$ is a product of three matrices. The second matrix comes from the definition of a memoryless operator with a lower sector bound of $\kappa I_{p+m}$ and an upper sector bound of $L I_{p+m}$. The first and third matrices are the congruence transformation from \cite[Section 6.1]{MF-AR-MM-VMP:18} necessary to make the feedback interconnection positive instead of negative, as required by \cite[Corollary 6]{JV-CWS-HK:16}.\envend
\end{remark}

\smallskip

If the closed-loop system satisfies the LMI of Proposition \ref{Prop:CircleCriterion}, the OSS control problem is solved. Relaxing the strong convexity and Lipschitz continuity assumptions appears to be feasible but will necessitate a more sophisticated stability analysis, which we do not pursue here. We will demonstrate by simulation in Section \ref{Sec:IllustrativeExample} that the Lipschitz assumption is not necessary for stability.

Using the stability criterion of Proposition \ref{Prop:CircleCriterion}, one can attempt gain tuning by performing a search over the gain matrices $K_P$ and $K_I$ for selections that result in feasibility of the LMI. For instance, set $K_P = k_PI_m$ and $K_I = k_II_m$ with scalar parameters $k_P \in \real$ and $k_I \in \real$ and search over a grid for a stabilizing combination $(k_P,k_I) \in \real^2$. The drawback to this strategy is its heuristic nature.

\subsection{Dynamic Stabilizer Synthesis}
Should the ``hand-tuned'' method of the previous section fail, we can attempt a more general synthesis procedure at the cost of a more complex controller structure. We replace the PI gain equation \eqref{SubEq:ControllerOutputPI} with a dynamic stabilizer, so that the input $u$ is generated by
\begin{equation*}
  \begin{aligned}
    \dot{x}_{\rm s} &= A_{\rm s}x_{\rm s}+B_{\rm s}\sigma\\
    u &= C_{\rm s}x_{\rm s}+D_{\rm s}\sigma\,,
  \end{aligned}
\end{equation*}
in which $\sigma \define (y,\eta,e)$. The design variables are the matrices $A_{\rm s}$, $B_{\rm s}$, $C_{\rm s}$, and $D_{\rm s}$, which we select to \emph{enforce} that a stability LMI like the one of Proposition \ref{Prop:CircleCriterion} is satisfied.

The most straightforward synthesis method is to perform a \emph{loop transformation} to move the nonlinearity $\Phi$ from the sector $[\kappa,L]$ to the sector $[-1,1]$ and apply standard tools from the \emph{$\mathcal{H}_\infty$ synthesis problem} to minimize the $\mathscr{L}_2$ gain $\gamma$ of the augmented linear system. The resulting closed-loop system is stable by the small-gain theorem if $\gamma < 1$. For more on loop transformations, see \cite[Chapters 6, 7]{HKK:02}. For details on the $\mathcal{H}_\infty$ synthesis problem, see \cite[Chapter 3]{AI:17}.

\section{Illustrative Examples}\label{Sec:IllustrativeExample}
We illustrate the tools of Sections \ref{Sec:ControllerDesign} and \ref{Sec:Stabilization} through three simple examples. The first example shows the use of Proposition \ref{Prop:CircleCriterion} to verify stability using hand-tuned PI gains. The second example shows a simulation that suggests the Lipschitz bound on $\nabla g$ is not necessary for stability. The third example shows that a dynamic controller serves to stabilize the closed-loop system when the PI controller fails.

\subsection{Stability Verification Using Proposition \ref{Prop:CircleCriterion}}
We take as the plant matrices
\begin{align*}
  A &\define \begin{bmatrix}-1&-4&-1&3\\1&-4&-1&-3\\-1&4&-1&-9\\0&0&0&-4\end{bmatrix}\,, & B &\define \begin{bmatrix}0\\1\\0\\1\end{bmatrix}\,,\\
  C &\define \begin{bmatrix}1&-1&0&-4\\1&0&2&0\end{bmatrix}\,.
\end{align*}
One may verify that $(A,B)$ is stabilizable and $(C,A)$ is detectable. We make the columns of $\gradmat$ an orthonormal basis of $\nullspace \begin{bmatrix}A&B\end{bmatrix}$, resulting in
\begin{equation*}
  \gradmat^{\sf T} = \begin{bmatrix}0.1661&0.2491&-0.6644&0.1661&0.6644\end{bmatrix}\,.
\end{equation*}
We let $R$ be as defined in \eqref{Eq:Py}.

We suppose we are interested in stabilizing the system for \emph{any} objective function that is strongly convex with parameter $\kappa = 1/9$ and whose gradient is Lipschitz continuous with parameter $L = 1$. We were able to verify the corresponding optimizer is globally asymptotically stable using Proposition \ref{Prop:CircleCriterion} for the 100 gain combinations $(K_P,K_I) \in \{0.2,0.4,\ldots,2\}^2$.

\subsection{Non-Lipschitz Objective Function}\label{SubSec:NonLipschitz}
Although Proposition \ref{Prop:CircleCriterion} supposes $\nabla g$ is globally Lipschitz continuous, simulations show that the closed-loop system may be stable even if this assumption does not hold. Consider the objective function
\begin{equation*}
  g(y_1,y_2,u) = \cosh\left(\frac{y_1}{2}\right) + \cosh\left(\frac{y_2}{3}\right) + u^2\,.
\end{equation*}
The function $g$ is strongly convex with parameter $\kappa = 1/9$, but $\nabla g$ is not globally Lipschitz. We suppose the disturbance input $d(t)$ is $\begin{bmatrix}-1&3&1&2\end{bmatrix}^{\sf T}$ for  $0 \leq t < 5$, $\begin{bmatrix}2&-3&0&0\end{bmatrix}^{\sf T}$ for $5 \leq t < 10$, and $\begin{bmatrix}1&0&0&-1\end{bmatrix}^{\sf T}$ for $t \geq 10$. We set $K_I = 5$ and $K_P = 10$. Simulating the closed-loop system in \texttt{MATLAB} using the \texttt{ode15i} implicit ODE solver yields Figure \ref{Fig:ExampleSimulation}. The controller tracks the optimizer asymptotically for each value of the disturbance input, despite $\nabla g$ failing the global Lipschitz condition.

The matrix $M$ in Proposition \ref{Prop:CircleCriterion} can be modified to attempt stability verification in the case when $g$ is strongly convex but $\nabla g$ is not globally Lipschitz. Specifically, one may set
  \begin{equation*}
    M = \left(\begin{bmatrix}1&0\\0&-1\end{bmatrix}^{\sf T}\begin{bmatrix}-2\kappa & 1\\1 & 0\end{bmatrix}\begin{bmatrix}1&0\\0&-1\end{bmatrix}\right) \otimes I_{p+m}\,.
  \end{equation*}
  However, this modified LMI was infeasible for any gain value we tried. This suggests that in general, more sophisticated stability criteria are required when the objective function does not have a globally Lipschitz gradient. Note $\nabla g$ of this example is still \emph{locally} Lipschitz; that is, $\nabla g$ has a Lipschitz parameter when its domain is restricted to a bounded subset of $\real^{p+m}$. If one could establish boundedness of trajectories in advance, then semi-global stability results should follow.

\begin{figure}
  \begin{center}
    \includegraphics[width=\linewidth]{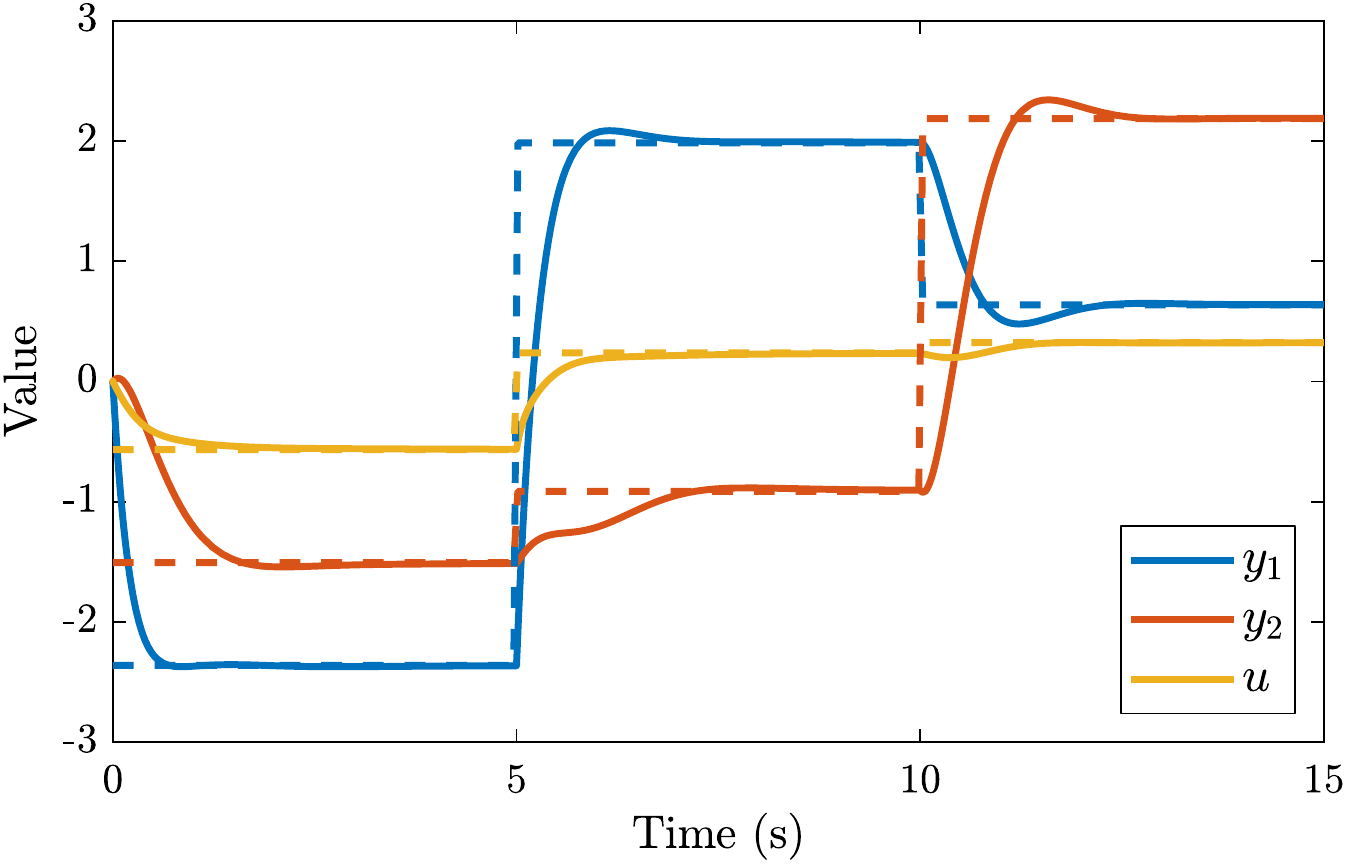}
  \end{center}
  \caption{The output variables $(y_1,y_2)$ and the input $u$ plotted as a function of time for the example of Section \ref{SubSec:NonLipschitz}. The optimizer for each variable over each time interval is shown as a dashed line.\label{Fig:ExampleSimulation}}
\end{figure}

\subsection{Dynamic Controller Synthesis for an Unstable Plant}\label{SubSec:DynamicController}
As mentioned in Section \ref{Sec:Stabilization}, an alternative to hand-tuning the PI controller and verifying stability using Proposition \ref{Prop:CircleCriterion} is to use a fully dynamic LTI controller and employ $\mathcal{H}_\infty$ synthesis methods to select the controller matrices. We suppose we are interested in OSS control for the \emph{unstable} plant
\begin{align*}
  A &\define \begin{bmatrix}-1&-4&-1&3\\1&-4&-1&-3\\-1&4&-1&-9\\0&0&0&1\end{bmatrix}\,, & B &\define \begin{bmatrix}0\\1\\0\\1\end{bmatrix}\,,\\
  C &\define \begin{bmatrix}1&-1&0&-4\\1&0&2&0\end{bmatrix}\,,
\end{align*}
and objective functions that are strongly convex with parameter $\kappa = 1$ and have Lipschitz continuous gradients with parameter $L = 2$. Once again $(A,B)$ is stabilizable and $(C,A)$ is detectable. The matrix $A$ has eigenvalues $\{-2,-2+2i,-2-2i,1\}$. We let the disturbance $d(t)$ be the same as in Section \ref{SubSec:NonLipschitz}.

We attempted stability verification using Proposition \ref{Prop:CircleCriterion} for the 49 gain combinations $(K_P,K_I) \in \{10^{-3},10^{-2},\ldots,10^{2},10^{3}\}^2$. The LMI solver failed in each case, and simulations further suggest the PI controller is incapable of stabilizing the closed-loop system. By contrast, we were able to synthesize a functioning dynamic controller whose behaviour is demonstrated in Figure \ref{Fig:DynamicStabilizer} for the objective function $g(y,u) = y_1^2+\frac{1}{2}y_2^2+\frac{1}{2}u^2$. While the dynamic controller is stabilizing, the closed-loop performance is poor. The response exhibits large overshoot and slow convergence to the optimizer. A more sophisticated synthesis procedure using the methods described in \cite{CS-PG-MC:97} could design for stability and performance simultaneously, and is one focus of future work.

\begin{figure}
  \centering
  \includegraphics[width=\linewidth]{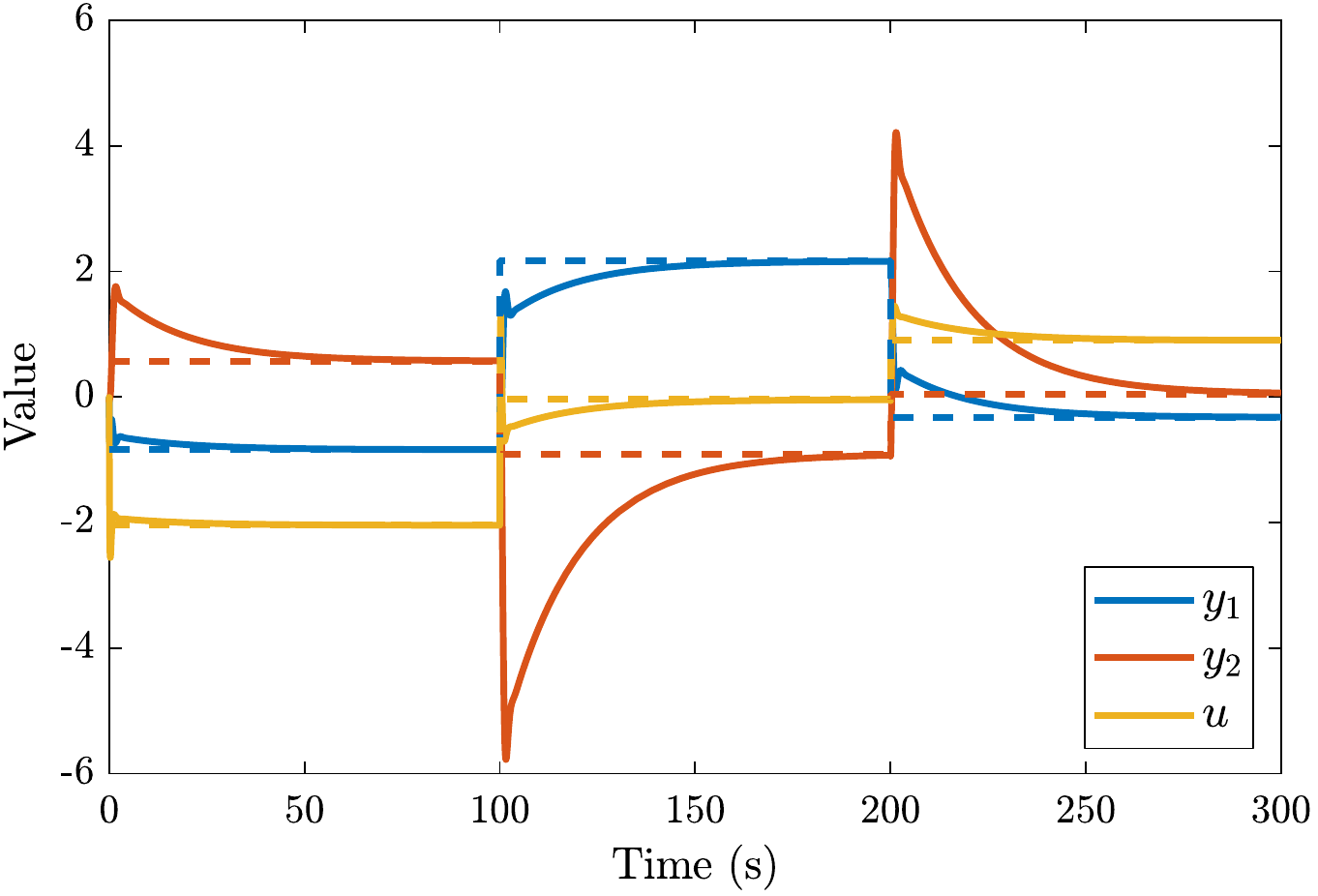}
  \caption{The output variables $(y_1,y_2)$ and the input $u$ plotted as a function of time for the example of Section \ref{SubSec:DynamicController}. The optimizer for each variable over each time interval is shown as a dashed line.}
\label{Fig:DynamicStabilizer}
\end{figure}

\section{Conclusions}\label{Sec:Conclusions}
We have presented a controller that solves the optimal steady-state control problem for linear time-invariant systems. Our controller is able to optimize over both the input and output by including the equilibrium equations as constraints of the optimization problem, but does not require additional dual variable controller states to be associated with these constraints. The potential application areas of optimal steady-state controllers are numerous \textemdash{} including power systems and chemical processing plants \textemdash{} as these controllers minimize the operating cost of any engineering system whose optimal set-point changes over time.

Future work will present a more general definition of the OSS control problem, with an exploration of the necessary and sufficient conditions on the controller. This generalization will include: adding equality and inequality constraints to the optimization problem to represent engineering constraints such as actuator limits or tie-line flow contracts in multi-area power systems; extending the approach to time-varying disturbances; explicitly considering robustness to parametric uncertainty; and outlining an architecture for OSS controllers that divides the controller structure into several sub-systems with well-defined roles.


\renewcommand{\baselinestretch}{1}
\bibliographystyle{IEEEtran}
\bibliography{alias,New,Main,JWSP}


\end{document}